\numberwithin{equation}{section}
\theoremstyle{plain}
\newtheorem{theorem}{Theorem}[section]
\newtheorem*{theorem*}{Theorem}
\newtheorem{lemma}[theorem]{Lemma}
\newtheorem{corollary}[theorem]{Corollary}
\newtheorem{proposition}[theorem]{Proposition}
\theoremstyle{remark}
\newtheorem{remark}[theorem]{Remark}
\numberwithin{equation}{section}
\theoremstyle{definition}
\numberwithin{equation}{section}
\newcommand{\NN}{\Bbb{N}}
\newcommand{\RR}{\Bbb{R}}
\newcommand{\CC}{\Bbb{C}}
\newcommand{\MH}{Mihlin--H\"ormander }
\newcommand{\vg}{\mathfrak{v}}
\newcommand{\zg}{\mathfrak{z}}
\newcommand{\di}{\,{\rm{d}}}
\newcommand{\nep}{{\rm{e}}} 
\begin{document}

\title[Spectral multipliers for Laplacians]
{Spectral multipliers for Laplacians\\ with drift on Damek--Ricci spaces}

\subjclass[2010]{47A60, 42B15, 22E30, 43A80}

\author{Alessandro Ottazzi}
\address{
CIRM Fondazione Bruno Kessler, Via Sommarive 14, 38123 Trento, Italy}
\email{ottazzi@fbk.eu}

\author{Maria Vallarino}
\address{Dipartimento di Scienze Matematiche Giuseppe Luigi Lagrange, Politecnico di Torino, Corso Duca degli Abruzzi 24, 10129 Torino, Italy}
\email{maria.vallarino@polito.it}

\keywords{Laplacian with drift, nonunimodular groups, spectral multipliers, exponential growth}

\thanks{Work partially supported by Progetto GNAMPA 2012 ``Analisi armonica su variet\`a, spazi di Wiener e gruppi di Lie'' and by Progetto GNAMPA 2013 ``Analisi armonica e geometrica su gruppi di Lie e spazi
subRiemanniani''}


\maketitle 

\begin{abstract}
We prove a multiplier theorem for certain Laplacians with drift on Damek--Ricci spaces, which are a class of Lie groups of exponential growth. Our theorem  
generalizes previous results obtained by W. Hebisch, G. Mauceri and S. Meda on Lie groups of polynomial growth.  

\end{abstract}

\section{Introduction}\label{intro}

W. Hebisch, G. Mauceri and S. Meda \cite{HMM} studied spectral multipliers of right invariant sub-Laplacians with drift on a noncompact connected Lie group $G$. The operators they consider are self--adjoint with respect to a positive measure, whose density with respect to the left Haar measure is a nontrivial positive character of $G$. If $G$ is amenable, they showed that every $L^p$ spectral multiplier of such sub-Laplacians with drift extends to a bounded holomorphic function on a parabolic region in the complex plane. When $G$ is of polynomial growth they proved that this necessary condition is nearly sufficient, by proving that bounded holomorphic functions on a suitable parabolic region which satisfy certain regularity conditions are spectral multipliers of such operators. However, if $G$ has exponential growth, the question of finding a sufficient condition for a function to be a $L^p$ multiplier of self-adjoint sub-Laplacians with drift remains open. In this note we contribute to this problem by considering certain Laplacians with drift on a class of Lie groups of exponential growth, namely the harmonic extensions of $H$-type groups.
 
Before going into more detail, we should mention that sub-Laplacians with drift were studied by other authors in the literature. Heat kernel estimates for sub-Laplacians with drift were studied on various Lie groups in \cite{Al2, D, VSC}. N. Lohouh\'e and S. Mustapha \cite{LM} studied the $L^p$ boundedness of the Riesz transforms of any order associated with sub-Laplacians with drift on every amenable Lie group. In particular, they analysed in \cite[Section IV]{LM}  the case of harmonic extensions of $H$-type groups. 
 
\smallskip

Let now $(\mathcal X,\mu)$ be a measure space and $T$ a linear nonnegative self-adjoint operator on $L^2(\mu)$. Let $\{E(\lambda)\}$ denote the spectral resolution of the identity 
for which $T = \int_0^{\infty}\lambda\di E(\lambda)$. 
By the spectral theorem, if $M$ is a bounded Borel measurable function on 
$[0,\infty)$, then the operator $M(T)$ defined by
$$
M(T)=\int_0^{\infty}M(\lambda)\di E(\lambda)   
$$
is bounded on $L^2(\mu)$. We call $M$ a $L^p$ {\emph{spectral multiplier}} for $T$, $p\in [1,\infty)\setminus\{2\}$, if $M(T)$ extends to bounded operator on $L^p(\mu)$. The spectral multiplier problem for $T$ consists in finding conditions, necessary or sufficient, for a function $M$ to be a $L^p$ multiplier for $T$. 

We say that an operator $T$ admits a $L^p$ \emph{holomorphic functional calculus}
if every $L^p$ multiplier of $T$ extends to an holomorphic function
on some neighbourhood of its $L^2$ spectrum. By contrast, an operator $T$ is said to admit a $L^p$ \emph{differentiable functional calculus} 
if a function which satisfies suitable differentiable conditions on $[0,\infty)$ is 
a $L^p$ multiplier for $T$.

\smallskip

The functional calculus for Laplacians on Lie groups has been intensively studied. Let $G$ be a connected noncompact Lie group of topological dimension $n$. Let $X_0,...,X_{n-1}$ be a basis of left invariant vector fields on $G$ and let
$$
L_0=-\sum_{i=0}^{n-1}X_i^2
$$ 
be the corresponding {\emph{Laplacian}}, which is nonnegative and essentially self-adjoint on $L^2(\rho)$, where $\rho$ is the right Haar measure of $G$. The functional calculus for $L_0$ has been the object of investigation in different classes of Lie groups.  If the group $G$ has polynomial growth, then $L_0$ has differentiable functional calculus \cite{Al1}.
 When $G$ is of exponential growth the situation changes. 
There are classes of Lie groups of exponential growth  and Laplacians $L_0$ which admit $L^p$ differentiable functional calculus \cite{CGHM, H, HS}, and others which  admit a $L^p$ holomorphic functional calculus \cite{CMu, HLM, LuMu, LuMuS}. We refer the reader to \cite{CGHM, HMM} and the references therein for a more detailed discussion. 

\smallskip

Operators of the form 
$$
L_X=L_0-X\,,
$$
with $X$ a left invariant vector field, are called {\emph{Laplacians with drift}} and they also have been studied by various authors.  It turns out that $L_X$ is symmetric with respect to some measure if and only if there exists a nontrivial positive character $\chi$ such that $X=\sum_{i=0}^{n-1}X_i(\chi)(e)X_i$ \cite{HMM, LM}, where $e$ denotes the identity of $G$. In this case $L_X$ is essentially self-adjoint on $L^2(\chi \di\rho)$ and one can study its functional calculus. If $G$ is amenable, then the results of Hebisch, Mauceri and Meda can be reformulated in terms of left invariant vector fields and right measure and imply that $L_X$ has $L^p$ holomorphic functional calculus. When $G$ has polynomial growth, they also established a sufficient condition for $L^p$ multipliers of $L_X$. To the best of our knowledge, it remains unknown if a similar sufficient condition holds for exponential growth Lie groups, where the technics in \cite{HMM} do not seem to apply. As we mentioned at the beginning of this section, we consider here the case of harmonic extensions of H-type groups, also called 
{\it Damek--Ricci spaces}. Such groups were 
introduced by E.~Damek and F.~Ricci \cite{D1}, \cite{D2}, \cite{DR1}, \cite{DR2},
and include all rank one symmetric spaces of the noncompact type.
Most of them are nonsymmetric harmonic manifolds,  
and provide counterexamples to the Lichnerowicz conjecture. The geometry of these spaces was studied by M.~Cowling, 
A.~H.~Dooley, A.~Kor\'anyi and Ricci in \cite{CDKR1}, \cite{CDKR2}.
Given an $H$-type group $N$, let $S=NA$ be the one-dimensional extension of $N$ obtained by letting $A=\Bbb{R}^+$ act on $N$ by homogeneous dilations. The group $S$ is solvable, hence amenable, nonunimodular and it is a Lie group of exponential growth. Let $L_0$ be a distinguished left-invariant Laplacian on $S$ (see Section 3 for its precise definition). The operator $L_0$ has a $L^p$ differentiable functional calculus \cite{A, HS, M, V}.  Our main result, Theorem \ref{main}, concerns spectral multipliers of $L_X$, where $X$ is a drift such that $L_X$ is symmetric. We prove that for every $p$ in $(1,\infty)\setminus \{2\}$, a  function $M$ which is holomorphic in a parabolic region depending on the drift and on $p$ and which satisfies suitable regularity conditions on its boundary is a $L^p$ spectral multiplier of $L_X$. Our result generalizes the one proved in \cite{HMM} for polynomial growth Lie groups to Damek--Ricci spaces. We prove it by splitting the kernel of the multiplier operator into a local and a global part: the analysis of the local part follows the methods in \cite{HMM}, while the analysis of the global part requires a different proof and is based on spherical analysis. 

\smallskip
We conclude the introduction indicating some further questions that may be addressed. On the one hand, it would be interesting to extend the results obtained in this paper to subLaplacians with drift on Damek--Ricci spaces. On the other hand, one could also consider $NA$ groups 
coming from the Iwasawa decomposition of a noncompact semisimple Lie group of arbitrary rank and study $L^p$ multipliers of the 
Laplacians with drift $L_0-X$, where $L_0$ is the Laplacian studied in \cite{CGHM} and $X$ is a left invariant vector field such that $L_X$ is symmetric. This will be the object of further investigation.

\smallskip
 
The paper is organized as follows: in Section~2, we recall the definition of $H$-type group $N$ and its Damek--Ricci extension~$S$; then we summarize some results of spherical analysis on~$S$. In Section~3 we introduce the Laplacians with drift on Damek--Ricci spaces and prove our multiplier theorem.

\smallskip
 
Throughout the article the expression $A\lesssim B$ means that there exists a positive constant $C$ such that $A\leq C\,B\,.$

\section{Preliminaries}
We recall the definition of $H$-type groups, describe their Damek--Ricci extensions, and collect some results on spherical analysis on these spaces that we shall use. For more details the reader is referred to  \cite{A1, A, ADY, CDKR1, CDKR2, D1, D2, DR1, DR2, F, R}.
\smallskip

Let $\frak{n}$ be a Lie algebra equipped  with an inner product $\langle\cdot,\cdot\rangle$. Let $\frak{v}$ and $\frak{z}$ be complementary orthogonal subspaces of $\frak{n}$ such that $[\frak{n},\frak{z} ]=\{0\}$ and $[\frak{n},\frak{n}]\subseteq \frak{z}$.
According to Kaplan \cite{K}, the algebra $\frak{n}$ is of $H$-type if for every $Z$ in $\frak{z}$ of unit length the map $J_Z:\frak{v}\to \frak{v}$, defined by
$$\langle J_ZX,Y\rangle\,=\,\langle Z,[X,Y]\rangle\qquad\forall X, Y\in \frak{v}\,,$$
is orthogonal. The connected and simply connected Lie group $N$ associated to $\frak{n}$ is called  $H$-type group. We identify $N$ with its Lie algebra $\frak{n}$ via the exponential map
\begin{eqnarray*}
\frak{v}\times\frak{z} &\to& N\\
(X,Z)&\mapsto& \exp(X+Z)\,.
\end{eqnarray*}
Set $Q={(m_{\frak{v}}+2m_{\frak{z}})}/{2}$, where $m_{\frak{v}}$ and $m_{\frak{z}}$ are the dimensions of $\frak{v}$ and $\frak{z}$, respectively.

Let $S$ be the one-dimensional extension of $N$ obtained as semidirect product with $A=\Bbb{R}^+$, which acts  
on $N$ by homogeneous dilations. Let $H$ denote a vector in $\frak{a}$ acting on
$\frak{n}$ with eigenvalues $1/2$ and (possibly) $1$; we extend the inner product 
on $\frak{n}$ to the algebra $\frak{s}=\frak{n}\oplus\frak{a}$, by requiring $\frak{n}$
and $\frak{a}$ to be orthogonal and $H$ to be a unit vector.  The map
\begin{eqnarray*}
\frak{v}\times\frak{z}\times\mathbb{R}^+ &\to& S\\
(X,Z,a)&\mapsto& \exp(X+Z)\exp(\log a \,H)
\end{eqnarray*}
gives global coordinates on $S$. 
The
group $S$ is nonunimodular: the right and left Haar measures on
$S$ are  given by 
$${\mathrm{d}}\rho(X,Z,a)=a^{-1}\,{\mathrm{d}} X\,{\mathrm{d}} Z\,{\mathrm{d}} a
\qquad{\textrm{and}}\qquad
{\mathrm{d}}\mu(X,Z,a)=a^{-(Q+1)}\,{\mathrm{d}} X\,{\mathrm{d}} Z\,{\mathrm{d}} a \,.$$  
Therefore the modular function is $\delta(X,Z,a)=a^{-Q}$. 
We equip $S$ with the left invariant Riemannian metric which agrees with the inner product on $\frak{s}$ at the identity $e$. We denote by $d$ the distance induced by this Riemannian structure. For every point $x\in S$ we write $r(x)=d(x,e)$ and we denote by $a(x)$ the component along $\RR^+$ of $x$.

We may identify $S$ with the open unit ball $\mathcal B$ in $\mathfrak s$
$$\mathcal B=\{(X,Z,t)\in\vg \times \zg \times \RR: ~\|(X,Z,t)\|=|X|^2+|Z|^2+t^2<1\}\,,$$
via the bijection (see \cite{CDKR2}) $F:S\to \mathcal B$ defined by 
\begin{align*}
F(X,Z,a)=\frac{1}{\Big(1+a+\frac{1}{4}|X|^2\Big)^2+|Z|^2}\Big(&\Big(1+a+\frac{1}{4}|X|^2-J_Z\Big)X,2Z, \\
&-1+\Big(a+\frac{1}{4}|X|^2\Big)^2+|Z|^2\Big)\,.
\end{align*}
The left Haar measure on $S$ may  be normalized in such a way that for all functions $f$ in $C_c^{\infty}(S)$ 
$$\int _S f\di \mu=\int_0^{\infty}\int_{\partial \mathcal B}f\big(F^{-1}(r\omega)\big)A(r)\di r\di \sigma(\omega)\,,$$
where $\di \sigma$ is the surface measure on $\partial \mathcal B$ and
\begin{equation}\label{misura}
A(r)=2^{m_{\vg}+2m_{\zg}}\sinh ^{m_{\vg}+m_{\zg}}\left(\frac{r}{2}\right)\cosh ^{m_{\zg}}\left(\frac{r}{2}\right)
\qquad \forall r\in \mathbb R^+\,.
\end{equation}
It is easy to check that
\begin{equation}\label{pesoA}
A(r)\lesssim \left(\frac{r}{1+r}\right)^{n-1}\nep^{Qr}\qquad\forall r\in\RR^+\,.
\end{equation}
We say that a function $f$ on the group $S$ is radial if it depends only on the distance from the identity, i.e., if there exists a function $f_0$ defined on $[0,+\infty)$ such that $f(X,Z,a)=f_0(r)$, where $r=d\big((X,Z,a),e\big)$. We abuse the notation and write $f(r)$ instead of $f_0(r)$.  

Damek and Ricci \cite{DR1} defined the radialisation operator
$$\mathcal R: ~C^{\infty}_c(S)\to C^{\infty}_c(S)$$
in the following way:
$$\mathcal R f(x)=\left({\mathcal{\widetilde{R}}}(f\circ F^{-1})\right)\big(F(x)\big)\qquad\forall x\in S,$$
where ${\mathcal{\widetilde{R}}}$ is the radialisation operator on the ball $\mathcal B\,$ defined by
$$({\mathcal{\widetilde{R}}}\phi)(\omega)=\frac{1}{|\partial \mathcal B|}\int_{\partial \mathcal B}\phi(\|\omega\|\omega)\di \sigma(\omega)\,.$$
A function $f$ is radial if and only if $\mathcal R(f)=f$.   

On the Riemannian manifold $S$ we may consider the (positive definite) Laplace--Beltrami operator $\mathcal L$. A radial function $\phi$ on the group $S$ is called {\emph{spherical}} if it is an eigenfunction of $\mathcal L$ and $\phi(e)=1$. One can prove that all spherical functions are given by $\phi_{\lambda}=\mathcal R (a(\cdot)^{-i\lambda+Q/2})$, for $\lambda\in\CC$, and that the eigenvalue corresponding to $\phi_{\lambda}$ is $\lambda^2+Q^2/4$. In \cite[Lemma 1]{A}, it is shown that
\begin{equation}\label{stimafi0}
\phi_0(r)\lesssim \,(1+r)\,{\mathrm{e}}^{-\frac{Qr}{2}}\qquad\forall r\in\mathbb{R}^+\,.
\end{equation}
In the following lemma we shall estimate the modulus of $ \phi_\lambda$.  
\begin{lemma}\label{funzionisferiche}
For every $r\in\RR^+$ and $\lambda\in\CC$
$$
| \phi_\lambda(r) | \lesssim e^{|{\rm{Im}} \lambda | r}(1+r)e^{-\frac{Q}{2}r}\,.
$$
\end{lemma}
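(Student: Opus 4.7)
My plan is to reduce the estimate on $|\phi_\lambda|$ to the already recorded bound \eqref{stimafi0} for $\phi_0$, using the radialization formula $\phi_\lambda = \mathcal R(a(\cdot)^{-i\lambda+Q/2})$. Since $\mathcal R$ is an average operator on the sphere $\partial\mathcal B$, the triangle inequality gives
$$
|\phi_\lambda(x)|\leq \mathcal R\bigl(|a(\cdot)^{-i\lambda+Q/2}|\bigr)(x).
$$
Writing $\lambda=\mu+i\nu$ with $\mu,\nu\in\RR$, one has $|a^{-i\lambda+Q/2}|=a^{\nu+Q/2}=a^\nu\cdot a^{Q/2}$, so the task is to control the factor $a^\nu$ pointwise in terms of $r$.

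The geometric input I would use is the Lipschitz bound $|\log a(x)|\leq r(x)$ for every $x\in S$. This is standard for harmonic $NA$ extensions: $-\log a$ is (up to a fixed sign) the Busemann function at the boundary point of $\mathcal B$ corresponding to $\infty\in\partial S$, and Busemann functions are $1$-Lipschitz with respect to any Riemannian metric. Equivalently, one may integrate $(\log a)'$ along a minimizing geodesic from $e$ to $x$ and note that, because $H\in\mathfrak a$ is a unit vector and $\log a$ is dual to $H$, the gradient of $\log a$ has norm at most one. In either case, $a(x)^\nu\leq e^{|\nu|\,r(x)}$ for every real $\nu$.

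Now comes the main step. The radialization on the ball model averages a function over a sphere $\{\|F(\cdot)\|=\mathrm{const}\}\subset\mathcal B$; since $S$ is harmonic these spheres coincide with the geodesic spheres centered at $e$, so every point $F^{-1}(\|F(x)\|\omega)$ with $\omega\in\partial\mathcal B$ lies at Riemannian distance $r(x)$ from $e$. The factor $e^{|\nu|\,r(x)}$ is therefore constant along the orbit of the radialization and can be pulled outside:
$$
|\phi_\lambda(x)|\leq e^{|\mathrm{Im}\,\lambda|\,r(x)}\,\mathcal R\bigl(a(\cdot)^{Q/2}\bigr)(x)=e^{|\mathrm{Im}\,\lambda|\,r(x)}\,\phi_0(r(x)).
$$
Combining this with \eqref{stimafi0} yields the required bound $|\phi_\lambda(r)|\lesssim e^{|\mathrm{Im}\,\lambda|\,r}(1+r)e^{-Qr/2}$.

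The only nonroutine ingredient is the Lipschitz estimate on $\log a$ and the identification of ball-model spheres with geodesic spheres; both are classical facts about Damek--Ricci spaces and are available in the references cited at the beginning of Section~2, so this is where I would invoke the literature rather than redo the computation.
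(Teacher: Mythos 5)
Your argument is correct and is essentially the proof in the paper: the paper likewise writes $\phi_\lambda=\mathcal R(a(\cdot)^{-i\lambda+Q/2})$ as a spherical average, bounds $|a(z)^{-i\lambda}|\le e^{|{\rm Im}\,\lambda|\,r(z)}$ via the two-sided estimate $e^{-r(z)}\le a(z)\le e^{r(z)}$ (cited from \cite[Formula~(1.20)]{ADY}), uses \cite[Theorem~1.1]{R} to see that $r$ is constant on the ball-model spheres so the factor pulls out of the average, and then identifies the remainder with $\phi_0$ and applies \eqref{stimafi0}. The only difference is cosmetic: you justify the two geometric inputs by Busemann/harmonicity arguments where the paper cites \cite{ADY} and \cite{R}.
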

\begin{proof}
For all $r$ in $\mathbb R^+$
\begin{align}
\phi_\lambda(r)&={\mathcal R}\big(a(\cdot)^{-i\lambda+\frac{Q}{2}}\big)(x)\nonumber \\
&= \frac{1}{|\partial B|}\int_{\partial B}(a\circ F^{-1})^{-i\lambda}(\|F(x)\| \omega)(a\circ F^{-1})^{\frac{Q}{2}}(\|F(x)\| \omega)d\sigma(\omega)\label{formulaphilambda},
\end{align}
where $x$ is a point in $S$ such that $r(x)=r$.  By \cite[Formula (1.20)]{ADY},  for every $z\in S$ we have that
$$
e^{-r(z)}\leq a(z)\leq e^{r(z)},
$$
which implies 
\begin{equation}\label{adiz}
a(z)^{-i\lambda}\leq e^{|{\rm{Im}} \lambda|r(z)}.
\end{equation}
If $\omega\in \partial B$ and $z=F^{-1}(\|F(x)\|\omega)$, then by \cite[Theorem 1.1]{R}  
$$
r(z)=\frac{\log (1+\|F(x)\|)}{1-\|F(x)\|}=r(x).
$$
Therefore \eqref{adiz} becomes $a(z)^{-i\lambda} \leq e^{|{\rm{Im}} \lambda|r(x)}$.  Substituting in 
\eqref{formulaphilambda}, we obtain
\begin{align*}
|\phi_\lambda(r)|&\leq   \frac{1}{|\partial B|}\int_{\partial B} e^{|{\rm{Im}} \lambda|r(x)} (a\circ F^{-1})^{\frac{Q}{2}}(\|F(x)\| \omega)d\sigma(\omega)\\
&= e^{|{\rm{Im}} \lambda|r(x)}\phi_0(r)\\
&\lesssim e^{|{\rm{Im}} \lambda|r}(1+r)e^{-\frac{Q}{2}r},
\end{align*}
where in the last inequality we used \eqref{stimafi0}. This concludes the proof.

\end{proof}

The {\it spherical Fourier transform} ${\mathcal H} f$ of an integrable radial function $f$ on $S$ is defined by the formula
$${\mathcal H} f(\lambda)=\int_S \phi _{\lambda}\,f\,{\mathrm{d}}\mu  \,.$$
For ``nice'' radial functions $f$ on $S$, we have the following inversion formula 
$$f(x)=c_S\int_{0}^{\infty}{\mathcal H} f (\lambda)\,\phi _{\lambda}(x)\,| {\mathbf{c}} (\lambda) |^{-2} \,{\mathrm{d}} \lambda\qquad \forall x\in S\,,$$
and  the Plancherel formula: 
$$\int_S|f|^2\,{\mathrm{d}}\mu  =c_S\int_0^{\infty}|{\mathcal H} f (\lambda)|^2\,|{\mathbf{c}}(\lambda)|^{-2}\,{\mathrm{d}} \lambda\,,$$
where the constant $c_S$ depends only on $m_{\frak{v}}$ and $m_{\frak{z}}$, and ${\mathbf{c}}$ denotes the Harish-Chandra fun\-ction (see \cite[Section 2]{ADY}). Let ${\mathcal A}$ denote the Abel transform defined for any radial function $f$ on $S$ by
$$\mathcal Af(t)= \int_N f(X,Z,\nep^t)\,\nep^{-Qt/2}\di X\di Z\qquad \forall t\in\RR\,, $$
and let ${\mathcal F}$ denote the Fourier transform on the real line, defined by 
$${\mathcal F} g(s)=\int_{-\infty}^{+\infty}g(r)\,{\mathrm{e}}^{-isr}\,{\mathrm{d}} r\,,$$ 
for each integrable function $g$ on~$\mathbb{R}$. It is well known that ${\mathcal H}={\mathcal F}\circ {\mathcal A}$, hence ${\mathcal H}^{-1}={\mathcal A}^{-1}\circ {\mathcal F}^{-1}$. For later use, we recall the inversion formula for the Abel transform \cite[Formula (2.24)]{ADY}.
If $m_{\frak{z}}$ is even, then
\begin{equation}\label{inv1}
{\mathcal A}^{-1}f(r)=a_S^e\, \Big(-\frac{1}{\sinh r}\,\frac{\partial}{\partial r}\Big)^{m_{\frak{z}}/2}\Big(-\frac{1}{\sinh(r/2)}\,\frac{\partial}{\partial r}\Big)^{m_{\frak{v}}/2}f (r)\,,
\end{equation}
where $a_S^e=2^{-(3m_{\frak{v}}+m_{\frak{z}})/2}\pi^{-(m_{\frak{v}}+m_{\frak{z}})/2}$. On the other hand, if $m_{\frak{z}}$ is odd, then
\begin{equation}\label{inv2}
{\mathcal A}^{-1}f(r)=a_S^o\int_r^{\infty} \Big(-\frac{1}{\sinh s}\,\frac{\partial}{\partial s}\Big)^{(m_{\frak{z}}+1)/2}\Big(-\frac{1}{\sinh(s/2)}\,\frac{\partial}{\partial s}\Big)^{m_{\frak{v}}/2}f (s) \,{\mathrm{d}}\nu(s)\,,
\end{equation}
where $a_S^o= 2^{-(3m_{\frak{v}}+m_{\frak{z}})/2}\pi^{-n/2}$ and ${\mathrm{d}}\nu(s)=(\cosh s-\cosh r)^{-1/2}\sinh s \,{\mathrm{d}} s$.

In the sequel for every $p$ in $(1,\infty)$ we shall denote by $Cv_p(\rho)$ the space of right $L^p(\rho)$ convolutors on $S$, i.e. the space of distributions $k$ on $S$ such that the operator $f\mapsto f\ast k$ is bounded on $L^p(\rho)$. 

\section{Laplacians with drift on Damek--Ricci spaces}
 
Let $X_0,X_1,\ldots,X_{n-1}$ be a frame of orthonormal left invariant vector fields on $S$ such that 
$X_0(e)=H$, $\{X_1(e),\ldots,X_{m_{\frak{v}}}(e)\}$ is an orthonormal basis of $\frak{v}$ 
and $\{X_{m_{\frak{v}}+1}(e),\ldots,X_{n-1}(e)\}$ is an orthonormal basis of $\frak{z}$.
Let $L_0$ be the corresponding Laplacian  $L_0= -\sum_{i=0}^{n-1} X_i^2$\,.

The operator $L_0$  has the following relationship with the Laplace--Beltrami operator ${\mathcal L}$ (see \cite[Proposition~2]{A1})
\begin{equation}\label{relationship}
\delta^{-1/2}L_0\,\delta^{1/2}f=\Big({\mathcal L}-\frac{Q^2}{4} \Big)f,
\end{equation}
for all smooth compactly supported radial functions $f$ on $S$.  
The spectra of ${\mathcal L}-Q^2/4$ on $L^2(\mu)$ and $L_0$ on $L^2(\rho)$ are both $[0,+\infty)$. For each bounded measurable function $m$ on $\mathbb{R}^+$ the operators $m({\mathcal L} -Q^2/4)$ and $m(L_0)$ are spectrally defined  and related by 
$$\delta^{-1/2}m(L_0)\,\delta^{1/2}f=m({\mathcal L} -Q^2/4)f\,,$$ 
for smooth compactly supported radial functions $f$ on $S$.
If ${k_{m(L_0)}}$ and ${k_{m({\mathcal L}-Q^2/4)}}$ denote the convolution kernels of $m(L_0)$ and $m({\mathcal L}-Q^2/4)$, then 
$$m({\mathcal L}-{Q}^2/4) f=f\ast{k_{m({\mathcal L}-Q^2/4)}}
\qquad{{\textrm{and}}}\qquad 
m(L_0) f=f\ast{k_{m(L_0)}}\qquad\forall f\in C^{\infty}_c(S)\,,$$
where $\ast$ denotes the convolution on $S$. We shall also write $m(L_0)\delta_e$ to denote $k_{m({L_0})}$. The following proposition is proved in  \cite{A1,ADY}.
\begin{proposition}\label{relazionenuclei}
Let $m$ be a bounded measurable function on $\mathbb{R}^+$. Then ${k_{m({\mathcal L}-Q^2/4)}}$ is radial and ${k_{m(L_0)}}=\delta^{1/2}\,{k_{m({\mathcal L}-Q^2/4)}}$. The spherical transform ${\mathcal H} k_{m({\mathcal L}-Q^2/4)}$ of $k_{m({\mathcal L}-Q^2/4)}$ is given by 
$${\mathcal H} k_{m({\mathcal L}-Q^2/4)}(\lambda)=m(\lambda^2),$$
for every positive real number $\lambda$.
\end{proposition}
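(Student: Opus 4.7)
The plan is to derive all three assertions from spherical analysis on $S$. For parts~(1) and~(3), assume first that $m$ lies in a convenient dense subclass, say $m\in\mathcal S(\mathbb R)$, so that the integrals below converge absolutely. The spherical functions $\phi_\lambda$ are radial by construction and satisfy $\mathcal L\phi_\lambda=(\lambda^2+Q^2/4)\phi_\lambda$, so the distribution
$$k(x):=c_S\int_0^{+\infty}m(\lambda^2)\,\phi_\lambda(x)\,|\mathbf c(\lambda)|^{-2}\,\mathrm d\lambda$$
is radial. Applying the spherical inversion formula to a radial test function $f\in C_c^\infty(S)$ and interchanging $m(\mathcal L-Q^2/4)$ with the integral via the spectral theorem yields
$$m(\mathcal L-Q^2/4)f=c_S\int_0^{+\infty}m(\lambda^2)\,\mathcal Hf(\lambda)\,\phi_\lambda\,|\mathbf c(\lambda)|^{-2}\,\mathrm d\lambda=f*k,$$
the last equality being the convolution theorem for the spherical transform on radial functions. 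Hence $k_{m(\mathcal L-Q^2/4)}=k$ is radial and satisfies $\mathcal Hk_{m(\mathcal L-Q^2/4)}(\lambda)=m(\lambda^2)$, which gives (1) and (3). General bounded Borel $m$ is recovered by continuity of the bounded Borel functional calculus on $L^2(\mu)$.

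For part~(2), I invoke the extended form of~\eqref{relationship}, namely $\delta^{-1/2}m(L_0)\delta^{1/2}g=m(\mathcal L-Q^2/4)g$ for radial $g$ (recorded just before the statement of the proposition), and test it against a radial smooth compactly supported approximate identity $g_\varepsilon\to\delta_e$. Since $\delta$ is continuous with $\delta(e)=1$, the product $\delta^{1/2}g_\varepsilon$ also converges to $\delta_e$ distributionally, so passing to the limit yields
$$k_{m(L_0)}=m(L_0)\delta_e=\delta^{1/2}\,m(\mathcal L-Q^2/4)\delta_e=\delta^{1/2}\,k_{m(\mathcal L-Q^2/4)}.$$

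The main technical obstacle is regularity: when $m$ is merely bounded measurable, the integral defining $k$ need not converge absolutely and the kernels are genuine distributions, so the formal manipulations above must be justified by first treating $m$ of Schwartz decay (for which the spherical kernel is a smooth radial function on $S$ and all identities hold pointwise) and then extending by the continuity of $m\mapsto m(L_0)$ and $m\mapsto m(\mathcal L-Q^2/4)$ as contractive $*$-homomorphisms from the bounded Borel algebra into bounded operators on $L^2(\rho)$ and $L^2(\mu)$, respectively. A secondary subtlety is that~\eqref{relationship} is available only on radial functions, but this is precisely the setting in which it is applied, since the mollifiers $g_\varepsilon$ are chosen radial.
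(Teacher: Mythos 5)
Your argument is correct and is essentially the standard one: the paper itself gives no proof but simply cites \cite{A1,ADY}, and those references establish the statement exactly as you do, by realizing $m(\mathcal L-Q^2/4)$ on radial functions through the spherical inversion/convolution theorem (giving radiality and $\mathcal H k_{m(\mathcal L-Q^2/4)}(\lambda)=m(\lambda^2)$) and then transferring to $L_0$ via the conjugation identity \eqref{relationship} tested on a radial approximate identity. The only points to keep in mind are routine: the passage from Schwartz-class $m$ to general bounded Borel $m$ should be phrased via pointwise bounded convergence and strong operator convergence rather than norm continuity, and the identification $k_{m(\mathcal L-Q^2/4)}=k$ uses that the operator is left-invariant, hence already a convolution operator whose kernel is pinned down by testing on radial mollifiers.
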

Nontrivial positive characters of $S$ are given by $\chi_{\alpha}(X,Z,a)=a^{\alpha}$, $\alpha\in \RR\setminus\{0\}$. 
Let us consider the left invariant vector field $X_{\alpha}=\alpha \,  X_0$ and suppose that $\alpha \neq 0$. Reformulating the result by Hebisch, Mauceri and Meda \cite[Proposition 3.1]{HMM} 
in terms of left invariant vector fields and right measure, we get that the Laplacian with drift $L_{X}=L_0-X$ is symmetric with respect to some measure if and only if $X=\alpha X_0$. We shall denote by $L_\alpha$ the Laplacian with drift 
$$
L_\alpha=L_0-\alpha X_0\,.
$$
It is essentially 
self-adjoint on $L^2(\rho_{\alpha})$, where $\di\rho_{\alpha}=\chi_{\alpha}\,\di\rho$ 
and its spectrum is contained in the interval $[\alpha^2/4,\infty)$.  

\begin{remark}
A direct computation of the Laplace--Beltrami operator of Damek--Ricci spaces gives
$$
\mathcal L=L_0+QX_0=L_{-Q}\,.
$$
Thus the functional calculus for $L_{-Q}$ 
is equivalent to the functional calculus for the Laplace--Beltrami operator on $S$, which was studied in \cite{A, CS}. 
\end{remark}

\smallskip
 
Let $M$ be a bounded measurable function on $[\alpha^2/4,\infty)$. We shall give a sufficient condition on a function $M$ to be a $L^p$ spectral multiplier of $L_{\alpha}$. To do so, let us introduce some notation.  For every $p\in (1,\infty)\setminus\{2\}$ let $P_{\alpha,p}$ be the parabolic region  
$$P_{\alpha, p}=\left\{x+iy\in\CC:~x>\frac{y^2}{\alpha^2\,\sin ^2\phi_p^*}+\alpha^2/4\,\cos ^2\phi_p^* \right\}\,,  $$
where $\phi_p^*=\arcsin |2/p-1|$. For every positive numbers $W$ and $ \beta$ we denote by $\Sigma_W$ the complex strip defined by 
$\Sigma_W=\{x+iy\in\mathbb C: |y|<W\}$ and by $H^{\infty}(\Sigma_W;\beta)$ the space of bounded holomorphic functions $f$ on the strip 
$\Sigma_W$ such that
$$
|D^jf(s\pm iW)|\leq C\,(1+s^2)^{-j/2}\qquad \forall j=0,...,\beta,\forall s\in\mathbb R\,.
$$
We shall denote by $W_{\alpha,p}$ the number $|\alpha|\,|1/p-1/2| $.  Our main result is the following. 

\begin{theorem}\label{main}
Let $p\in (1,\infty)\setminus \{2\}$. Suppose that $M\in H^{\infty}(P_{\alpha,p})$ and that the function $M_{\alpha}$ 
defined by $M_{\alpha}(z)=M(z^2+\alpha^2/4)$ lies in 
$H^{\infty}(\Sigma_{W_{\alpha, p}};\beta)$, with $\beta>\max(2,n/2)$. 
Then $M(L_{\alpha})$ extends to a bounded operator on $L^p(\rho_{\alpha})$. 
\end{theorem}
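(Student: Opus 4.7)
Using $X_0\chi_{\alpha/2}=(\alpha/2)\chi_{\alpha/2}$ and that $X_1,\dots,X_{n-1}$ annihilate $\chi_{\alpha/2}$, a direct computation gives the intertwining
\begin{equation*}
L_\alpha=\chi_{\alpha/2}^{-1}\bigl(L_0+\alpha^2/4\bigr)\chi_{\alpha/2}\,,
\end{equation*}
hence $M(L_\alpha)=\chi_{\alpha/2}^{-1}m(L_0)\chi_{\alpha/2}$ with $m(\lambda)=M(\lambda+\alpha^2/4)$ and $m(\lambda^2)=M_\alpha(\lambda)$. Since $f\mapsto \chi_\alpha^{1/p}f$ is an isometry $L^p(\rho_\alpha)\to L^p(\rho)$, and since the $A$-component factors through an abelian quotient so that $\chi_\gamma(xy^{-1})=\chi_\gamma(y^{-1}x)$, conjugation turns $M(L_\alpha)$ into right convolution on $L^p(\rho)$ by
\begin{equation*}
\kappa:=\chi_\gamma\,k_{m(L_0)}=\chi_{\gamma-Q/2}\,k_0\,,\qquad \gamma=\alpha\bigl(\tfrac{1}{p}-\tfrac{1}{2}\bigr),\ |\gamma|=W_{\alpha,p}\,,
\end{equation*}
where $k_0:=k_{m(\mathcal L-Q^2/4)}$ is radial with $\mathcal H k_0=M_\alpha$, by Proposition~\ref{relazionenuclei} and $\delta^{1/2}=\chi_{-Q/2}$. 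The theorem reduces to showing $\kappa\in Cv_p(\rho)$.

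\textbf{Local part.} Following \cite{HMM}, fix a radial $\psi\in C_c^\infty(S)$ equal to $1$ near $e$ and decompose $\kappa=\psi\kappa+(1-\psi)\kappa=:\kappa_{\mathrm{loc}}+\kappa_{\mathrm{glob}}$. On $\operatorname{supp}\psi$ the character $\chi_{\gamma-Q/2}$ is smooth and bounded, so $\kappa_{\mathrm{loc}}$ is a truncated spectral-multiplier kernel of $L_0$. The restriction of $M_\alpha$ to $\mathbb R$ satisfies Mihlin--H\"ormander-type estimates of order $\beta>n/2$, and the known $L^p(\rho)$ differentiable functional calculus for $L_0$ on Damek--Ricci spaces \cite{A,HS,M,V}, applied as in \cite{HMM}, yields $\kappa_{\mathrm{loc}}\in Cv_p(\rho)$ for all $p\in(1,\infty)$.

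\textbf{Global part.} Use $\mathcal H^{-1}=\mathcal A^{-1}\circ\mathcal F^{-1}$. Shifting the contour of $\mathcal F^{-1}M_\alpha$ into the strip $\Sigma_{W_{\alpha,p}}$ and integrating by parts $\beta$ times against the boundary estimates on $D^jM_\alpha$ yields $|\mathcal F^{-1}M_\alpha(t)|\lesssim (1+|t|)^{-\beta}e^{-W_{\alpha,p}|t|}$. Feeding this into~\eqref{inv1}--\eqref{inv2} and using that the operators $(\sinh r)^{-1}\partial_r$ and $(\sinh(r/2))^{-1}\partial_r$ together contribute an asymptotic factor $\sim e^{-Qr/2}$ for large $r$ gives
\begin{equation*}
|k_0(r)|\lesssim (1+r)^{-\beta+N}\,e^{-(W_{\alpha,p}+Q/2)r}\qquad (r\ge 1)\,,
\end{equation*}
for some $N=N(n)$. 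To verify $\kappa_{\mathrm{glob}}\in Cv_p(\rho)$, pass to the left Haar measure via $d\rho=a^Q d\mu$ and integrate in the spherical coordinates induced by $F$: the angular integral $\int_{\partial\mathcal B}a(F^{-1}(r\omega))^{\gamma+Q/2}\,d\sigma(\omega)$ equals a constant times the spherical function $\phi_{i\gamma}(r)$, which by Lemma~\ref{funzionisferiche} is bounded by $e^{W_{\alpha,p}r}(1+r)e^{-Qr/2}$. Combined with $A(r)\lesssim e^{Qr}$, all exponential factors cancel exactly, so that a standard Young-type convolution inequality on $(S,\rho)$ yields the desired convolver bound whenever $\beta>\max(2,n/2)$.

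\textbf{Main obstacle.} The delicate point is the exact cancellation of three competing exponential factors in the global estimate: the character growth $e^{W_{\alpha,p}r}$ (after radialisation via Lemma~\ref{funzionisferiche}), the kernel decay $e^{-(W_{\alpha,p}+Q/2)r}$ from the Abel contour shift, and the volume growth $e^{Qr}$. The parabolic region $P_{\alpha,p}$ is tuned precisely so that this balance holds, and all the integrability must come from the polynomial decay extracted from the $\beta$ derivatives of $M_\alpha$ on $\partial\Sigma_{W_{\alpha,p}}$, after accounting for the $\sim n/2$ orders of regularity consumed by the differential operators in the Abel inversion---whence the hypothesis $\beta>\max(2,n/2)$. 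Matching the local and global analyses on the transition region $\operatorname{supp}\nabla\psi$, and handling the odd-$m_{\mathfrak z}$ case of~\eqref{inv2} with its additional $\nu$-integration, are the principal technical points.
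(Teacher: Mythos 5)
Your overall strategy is the same as the paper's: reduce via the character conjugation to showing that $\chi_\alpha^{1/p-1/2}\,k_{m(L_0)}$ is in $Cv_p(\rho)$, split into a local and a global part, treat the local part with the known Mihlin--H\"ormander functional calculus for $L_0$, and treat the global part by passing to $\mu$, radialising (which produces exactly the spherical function $\phi_{i\alpha(1/p-1/2)}$), and invoking the inverse Abel transform together with Lemma~\ref{funzionisferiche} and the volume estimate \eqref{pesoA} so that the three exponentials cancel. Two execution points differ from the paper and one of them is a genuine gap as written.

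First, the global part. You claim $|k_0(r)|\lesssim (1+r)^{-\beta+N}e^{-(W_{\alpha,p}+Q/2)r}$ ``for some $N=N(n)$'' and then conclude for $\beta>\max(2,n/2)$. These two statements are incompatible: after the exact exponential cancellation the surviving integrand is $(1+r)^{1-\beta+N}$, which is integrable only for $\beta>2+N$, so any genuine polynomial loss $N>0$ destroys the claimed hypothesis. The theorem survives because in fact $N=0$, but to see this you must prove the uniform derivative estimate $|D^k\widehat{M_\alpha}(t)|\lesssim (1+|t|)^{-\beta}e^{-W_{\alpha,p}|t|}$ for \emph{every} $k\le\beta$ (this is estimate \eqref{derPh}, obtained by shifting the contour and integrating by parts $\beta$ times for each $k$ separately, using $|D^jM_\alpha(s\pm iW)|\le C(1+s^2)^{-j/2}$); then each operator $(\sinh s)^{-1}\partial_s$ or $(\sinh(s/2))^{-1}\partial_s$ in \eqref{inv1}--\eqref{inv2} costs an exponential but no polynomial power (Lemma~\ref{DPhhat}). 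This is the quantitative heart of the proof and is missing from your write-up; relatedly, the hypothesis $\beta\ge n/2$ enters the global part because the Abel inversion consumes up to $(m_{\frak z}+1)/2+m_{\frak v}/2=n/2$ derivatives of $\widehat{M_\alpha}$, not because of a polynomial loss. The odd-$m_{\frak z}$ case, which you defer, also genuinely requires the extra estimates near $s=r$ in the $\nu$-integral and the contribution from $r$ far below the support of the integrand. Second, the local part: you truncate spatially by $\psi$, whereas the paper truncates $\widehat{M_\alpha}$ by a cutoff $\eta$ so that finite propagation speed makes the local piece the compactly supported kernel of an honest H\"ormander multiplier. Your $\psi\kappa$ is \emph{not} the kernel of a spectral multiplier, so ``the differentiable functional calculus yields $\kappa_{\rm loc}\in Cv_p(\rho)$'' does not apply as stated; the step can be repaired by first noting $k_{m(L_0)}\in Cv_p(\rho)$ (since $v\mapsto M_\alpha(\sqrt v)$ is a H\"ormander multiplier) and then using that $Cv_p(\rho)$ is a $C_c^\infty(S)$-module, but that property must be invoked explicitly.
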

\begin{proof}
By \cite[Proposition 4.1]{HMM}  for each $p$ in 
$ (1,\infty)$ the operator $M(L_{\alpha})$ 
is bounded on $L^p(\rho_{\alpha})$ if and only if $\chi_{\alpha}^{1/p-1/2}\, {M(  L_0+\alpha^2/4)}\delta_e$ is in $Cv_p(\rho)$.

Let $\omega$ be a smooth cutoff function supported in $[-1,1]$, equals $1$ in $[-1/4,1/4]$ such that 
$$
\sum_{h\in\mathbb Z} \omega(t-h)=1\qquad \forall t\in\mathbb R\,.
$$
For all $h\geq 2$, let 
$$
\omega_h(t)=\omega(t-h+1)+\omega(t+h-1).
$$
Note that ${\rm supp}\,\omega_h\subset [h-2,h]\cup [-h,-h+2]$.
We split the kernel of $M(L_0+\alpha^2/4)$ into a local and global part as follows:
$$
M(L_0+\alpha^2/4)\delta_e=\hat\eta \ast_{\mathbb R}M_{\alpha}(\sqrt L_0)\delta_e+
\widehat{(1-\eta)} \ast_{\mathbb R}M_{\alpha}(\sqrt L_0)\delta_e=\ell+g\,,
$$
where $\eta=\omega+\omega_2$. Define
\begin{equation}\label{lpgp}
\ell_p= \chi_{\alpha}^{1/p-1/2}\ell \qquad{\rm{and}}\qquad  g_p=\chi_{\alpha}^{1/p-1/2}g\,.
\end{equation}
In Proposition \ref{local} and Proposition \ref{global}, we shall prove that $\ell_p\in Cv_p(\rho)$ and $g_p\in Cv_p(\rho)$, respectively. This concludes the proof.
\end{proof}
As a direct consequence of the theorem above we compute the spectrum of $L_{\alpha}$ on $L^p(\rho_{\alpha})$, which we denote by $\sigma_p(L_\alpha)$. 
\begin{corollary}
Let $p\in (1,\infty)\setminus \{2\}$. The spectrum $\sigma_p(L_\alpha)$ is the parabolic region $P_{\alpha,p}$.
\end{corollary}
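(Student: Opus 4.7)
The plan is to establish the two inclusions $\sigma_p(L_\alpha)\subseteq\overline{P_{\alpha,p}}$ and $\overline{P_{\alpha,p}}\subseteq\sigma_p(L_\alpha)$ separately; since the spectrum is always closed, this yields the corollary.

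For the first inclusion, fix $\zeta\notin\overline{P_{\alpha,p}}$ and apply Theorem \ref{main} to the resolvent function $M(z)=(\zeta-z)^{-1}$. The crucial geometric observation is that the holomorphic map $z\mapsto z^2+\alpha^2/4$ sends the closed strip $\overline{\Sigma_{W_{\alpha,p}}}$ onto $\overline{P_{\alpha,p}}$: using $W_{\alpha,p}=|\alpha||1/p-1/2|=|\alpha|\sin\phi_p^*/2$, a direct computation shows that if $z=s\pm iW_{\alpha,p}$ and $u+iv=z^2+\alpha^2/4$, then $u=v^2/(\alpha^2\sin^2\phi_p^*)+\alpha^2\cos^2\phi_p^*/4$, which is exactly the boundary parabola of $P_{\alpha,p}$; horizontal lines at intermediate imaginary part sweep out the interior. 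Consequently $M\in H^\infty(P_{\alpha,p})$, and $M_\alpha(z)=(\zeta-z^2-\alpha^2/4)^{-1}$ is bounded holomorphic on $\Sigma_{W_{\alpha,p}}$. Since the quadratic $q(z)=\zeta-z^2-\alpha^2/4$ stays bounded away from zero on the closed strip and grows like $|z|^2$, the Leibniz rule applied to $M_\alpha=1/q$ yields $|D^jM_\alpha(s\pm iW_{\alpha,p})|\lesssim (1+s^2)^{-(j+2)/2}$, which is stronger than the bound $(1+s^2)^{-j/2}$ required in the definition of $H^\infty(\Sigma_{W_{\alpha,p}};\beta)$. Theorem \ref{main} then yields boundedness of $M(L_\alpha)=(\zeta-L_\alpha)^{-1}$ on $L^p(\rho_\alpha)$, so $\zeta$ belongs to the resolvent set.

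For the reverse inclusion, I would appeal to the necessary condition of Hebisch, Mauceri and Meda recalled in the introduction: since $S$ is amenable, every bounded $L^p(\rho_\alpha)$ spectral multiplier of $L_\alpha$ extends to a bounded holomorphic function on $P_{\alpha,p}$. Were some $\zeta\in P_{\alpha,p}$ in the resolvent set, then $(\zeta-z)^{-1}$ would be such a multiplier while having a pole at $\zeta$, a contradiction. Hence $P_{\alpha,p}\subseteq\sigma_p(L_\alpha)$, and closedness of the spectrum gives $\overline{P_{\alpha,p}}\subseteq\sigma_p(L_\alpha)$.

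The main obstacle is the sharp matching between the strip parameter $W_{\alpha,p}$ and the parabola opening governed by $\sin\phi_p^*$; once one observes the identity $2W_{\alpha,p}=|\alpha|\sin\phi_p^*$, the squaring map identifies the two domains exactly, and both the derivative bounds and the application of Theorem \ref{main} become routine.
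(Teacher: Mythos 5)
Your proof follows the same route as the paper's: for $\zeta$ outside the region you apply Theorem~\ref{main} to the resolvent function $(\zeta-z)^{-1}$, and for $\zeta$ inside you invoke the Hebisch--Mauceri--Meda necessary condition (their Theorem~4.2, i.e.\ that $L^p$ multipliers of $L_\alpha$ must extend holomorphically to $P_{\alpha,p}$) to rule out boundedness of $(\zeta-L_\alpha)^{-1}$. Your write-up is in fact somewhat more careful than the paper's one-paragraph argument: you verify explicitly that the squaring map $z\mapsto z^2+\alpha^2/4$ carries $\Sigma_{W_{\alpha,p}}$ onto $P_{\alpha,p}$ via the identity $2W_{\alpha,p}=|\alpha|\sin\phi_p^*$, you check the derivative bounds placing $M_\alpha$ in $H^{\infty}(\Sigma_{W_{\alpha,p}};\beta)$, and you correctly observe that the argument really identifies $\sigma_p(L_\alpha)$ with the closure $\overline{P_{\alpha,p}}$ (the spectrum being closed while $P_{\alpha,p}$ as defined is open), a boundary point the paper glosses over.
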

\begin{proof}
If $w \notin P_{\alpha,p}$, then the function $M(z)=(w-z)^{-1}$ satisfies the hypothesis of Theorem \ref{main}. Then $(w-L_{\alpha})^{-1}$ is bounded on $L^p(\rho_{\alpha})$ and $w$ belongs to the resolvent of $L_{\alpha}$ on $L^p(\rho_{\alpha})$. This proves that $\sigma_p(L_\alpha)\subseteq P_{\alpha, p}$. 

On the other hand, let $w\in P_{\alpha, p}$. Then the function $M(z)=(w-z)^{-1}$ is not holomorphic in $P_{\alpha, p}$. By \cite[Theorem 4.2]{HMM} the operator $M(L_{\alpha})=(w-L_{\alpha})^{-1}$ is not bounded on $L^p(\rho_{\alpha})$. Then $w\in\sigma_p(L_\alpha)$.
\end{proof}

We proceed next with the analysis of the local and global part of the kernel of $M(L_0+\alpha^2/4)$ constructed above separately. In particular, we note that on Lie groups of polynomial growth
\cite{HMM} the analysis of the global part of the kernel was based on the ultracontractivity 
of the heat semigroup generated by $L_0$ and a consequence of the Dunford--Pettis Theorem. However, since Damek--Ricci spaces are nonunimodular, the heat semigroup associated with $L_0$ is not ultracontractive, i.e., $e^{-tL_0}$ is not bounded from $L^1(\rho)$ to $L^{\infty}(\rho)$. Moreover,  Dunford--Pettis Theorem cannot be applied as in \cite{HMM}. To study the behavior of the global part of the kernel  we shall instead use tools from spherical analysis.

\subsection{Analysis of the local part.} Note that by the Fourier inversion formula
$$\ell=\hat{\eta}\ast_\RR M_{\alpha}(\sqrt  L_0)\delta_e=\Big(\frac{1}{2\pi}\int_{\RR}\eta(t)\,\hat{M}_{\alpha}(t)\,
\cos(t\sqrt L_0)\di t\Big)\delta_e\,.$$
Since $\eta$ is supported in $[-2,2]$, by finite propagation speed, $\ell$ is supported in $B(e,2)$.

We recall that a function $M:\RR^+\to \CC$ satisfies a mixed \MH condition of order $(s_0,s_{\infty})$ if
$$\max_{j=0,\ldots,s_0}\sup_{0<v<1}|v^j\,D^jM(v)|<\infty\,,$$
and
$$\max_{j=0,\ldots,s_{\infty}}\sup_{v\geq 1}|v^j\,D^jM(v)|<\infty\,.$$ In this case we say that $M\in {\rm{Horm}}(s_0,s_{\infty})$. 

\begin{proposition} \label{local}
The following hold:
\begin{itemize}
 \item[(i)] Let $s_0>3/2$ and $s_{\infty}>\max(3/2,n/2)$. If $M_{\alpha}\in {\rm{Horm}}(s_0,s_{\infty})$, then 
$\hat\eta \ast_{\mathbb R}M_{\alpha}(\sqrt L_0)$ is of weak type $1$ and bounded on $L^r(\rho)$ for all $r\in (1,\infty)$;
\item[(ii)] given $p\in (1,\infty)\setminus 2$, the function 
$\chi_{\alpha}^{ (1/p-1/2)}\,\hat\eta \ast_{\mathbb R}M_{\alpha}(\sqrt L_0)\delta_e$ is in  $Cv_p(\rho)$. 
\end{itemize}
\end{proposition}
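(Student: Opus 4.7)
My plan exploits the compact support of $\ell$, which was already observed to follow from finite propagation speed (since $\eta$ is supported in $[-2,2]$): namely $\ell$ is supported in the closed ball $\bar B(e,2)$. On this bounded ball the Damek--Ricci space $(S,d,\mu)$ is comparable to Euclidean space of topological dimension $n$, so standard local \CZ theory should apply despite the exponential growth of $S$.

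For part (i), I would first rewrite $\hat\eta \ast_{\RR} M_{\alpha}(\sqrt{L_0})=\tilde m(\sqrt{L_0})$, where $\tilde m := \hat\eta \ast M_\alpha$ is a new even multiplier on $\RR$. Since $\hat\eta$ is Schwartz, convolution with $\hat\eta$ smooths $M_\alpha$ globally and kills polynomial growth at infinity: the mixed assumptions $s_0>3/2$ and $s_\infty>\max(3/2,n/2)$ on $M_\alpha$ are tuned so that $\tilde m$ satisfies an ordinary Hörmander condition of order strictly greater than $n/2$ on $[0,\infty)$. I would verify this by differentiating under the integral defining $\tilde m$ and splitting the result according to whether the integration variable is close to the base point or not, using the $s_\infty$-bound to control the near-part and the $s_0$-bound plus the rapid decay of derivatives of $\hat\eta$ to control the rest. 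Once this ordinary Hörmander condition on $\tilde m$ is in hand, I would invoke one of the known \MH theorems for $L_0$ on Damek--Ricci spaces (\cite{A,HS,M,V}) to conclude that $\tilde m(\sqrt{L_0})$ is of weak type $1$ and bounded on $L^r(\rho)$ for all $r\in(1,\infty)$.

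For part (ii), the key observation is that $\chi_\alpha^{1/p-1/2}$ is comparable to a positive constant on $B(e,2)$. Indeed, by the inequality $\nep^{-r(x)}\leq a(x)\leq \nep^{r(x)}$ recalled in the proof of Lemma~\ref{funzionisferiche}, one has that $\chi_\alpha(x)^{1/p-1/2}=a(x)^{\alpha(1/p-1/2)}$ is bounded above and below by positive constants on $B(e,2)$. Therefore $|\ell_p|\leq C\,|\ell|$ pointwise and $\ell_p$ has the same compact support as $\ell$. In particular $\ell_p\in L^1(\mu)$, and Young's inequality, together with the right-invariance of $\rho$, yields $f\ast\ell_p\in L^p(\rho)$ for every $f\in L^p(\rho)$, so $\ell_p\in Cv_p(\rho)$.

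The main obstacle I foresee is the Hörmander-condition transfer in part (i): correctly bridging the mixed Hörmander assumption on $M_\alpha$ and the classical Hörmander condition of order $>n/2$ needed to apply the existing multiplier theorems for $L_0$. The asymmetric roles of $s_0>3/2$ (low-frequency behavior, which after convolution with $\hat\eta$ controls boundedness and the low-order smoothness of $\tilde m$) and $s_\infty>\max(3/2,n/2)$ (high-frequency behavior, which feeds the local kernel singularity through the Schwartz decay of derivatives of $\hat\eta$) are tailored precisely for this transfer, and the bookkeeping needed to make it rigorous is where most of the technical work will lie.
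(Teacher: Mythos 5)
Your treatment of part (i) follows essentially the same route as the paper: transfer the mixed \MH condition from $M_\alpha$ to the smoothed multiplier $\hat\eta\ast_\RR M_\alpha$ (viewed as a function of $\sqrt{L_0}$) and then invoke the known multiplier theorem for $L_0$ on Damek--Ricci spaces; the paper uses \cite{V}, whose hypotheses are exactly the mixed condition with $s_0>3/2$ and $s_\infty>\max(3/2,n/2)$, so there is no need to upgrade to an ``ordinary'' Hörmander condition of order $>n/2$ (although, as you note, convolution with the Schwartz function $\hat\eta$ does make the behaviour near $0$ harmless). That part of your plan is sound in outline, with the transfer step being the technical work, as in \cite[Proposition 5.3]{HMM}.

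Part (ii), however, has a genuine gap. You assert that $\ell_p\in L^1(\mu)$ because $\ell_p$ is compactly supported and $|\ell_p|\le C\,|\ell|$ on $B(e,2)$. But $\ell=\hat\eta\ast_\RR M_\alpha(\sqrt{L_0})\delta_e$ is the convolution kernel of a singular integral operator: under the stated hypotheses ($M_\alpha$ merely in ${\rm{Horm}}(s_0,s_\infty)$ with $s_\infty$ just above $n/2$) it is a distribution with a nonintegrable \CZ singularity at the identity (think of the imaginary powers $L_0^{i\tau}$), so compact support does not put it in $L^1$, and Young's inequality is unavailable; indeed, if $\ell$ were in $L^1$ then part (i) itself would be trivial. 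Moreover, even setting integrability aside, pointwise domination $|\ell_p|\le C\,|\ell|$ does not transfer membership in $Cv_p(\rho)$, since convolutor bounds for singular kernels rest on cancellation, not on size. What is actually needed --- and what the paper uses --- is that $Cv_p(\rho)$ is a module over $C^\infty_c(S)$: since $\ell\in Cv_r(\rho)$ by part (i) and is supported in $B(e,2)$, multiplying it by the smooth function $\chi_\alpha^{1/p-1/2}$ (suitably cut off outside $B(e,2)$) keeps it in $Cv_p(\rho)$. The relevant feature of $\chi_\alpha^{1/p-1/2}$ on $B(e,2)$ is therefore its smoothness there, not merely the fact that it is bounded above and below by positive constants.
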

\begin{proof}
As in \cite[Proposition 5.3]{HMM} we prove that if $M_{\alpha}\in {\rm{Horm}}(s_0,s_{\infty})$, then\\ $v\mapsto  
\hat\eta \ast_{\mathbb R}M_{\alpha}(\sqrt v)$ lies in ${\rm{Horm}}(s_0,s_{\infty})$. By \cite{V} this implies that 
$\hat\eta \ast_{\mathbb R}M_{\alpha}(\sqrt L_0)$ is of weak type $1$ and bounded on $L^r(\rho)$ for all $r\in (1,\infty)$. 
This implies tht $\ell=\hat\eta \ast_{\mathbb R}M_{\alpha}(\sqrt L_0)\delta_e$ is in $Cv_r(\rho)$ for all $r\in(1,\infty)$. 

Since the function $\ell$ is supported in $B(e,2)$ and $Cv_p(\rho)$ is a $C^{\infty}_c(S)$ module, we deduce that the function 
$\ell_p=\chi_{\alpha}^{ (1/p-1/2)}\,\,\ell$ is also in $Cv_p(\rho)$.

\end{proof}

\begin{remark}
Notice that the condition we require on $M_{\alpha}$ in Proposition \ref{local} is weaker than the condition required in the statement of Theorem \ref{main}. The holomorphy of the multiplier is only used in the study of the global part of the kernel.  
\end{remark}

\subsection{Analysis of the global part.} 
We decompose the global part of the kernel $g_p$ as 
\begin{equation}\label{decgp}
g_p=  \chi_{\alpha}^{ (1/p-1/2)}\,  \sum_{h=3}^{\infty}P_h(\sqrt  L_0)\delta_e=\sum_{h=3}^{\infty}g_{p,h}\,,
\end{equation}
where $\hat{P}_h=\omega_h\,\hat{M_{\alpha}}$.
By Proposition \ref{relazionenuclei} $P_h(\sqrt  L_0)\delta_e=\delta^{1/2}\,k_h$, 
where $k_h$ is the radial function whose spherical transform is $\mathcal H{k_h}=P_h$. We need the following estimates of the derivatives of the functions $\hat P_h$.  

\begin{lemma}\label{DPhhat}
Let $M_{\alpha}$ be in  $H^{\infty}(\Sigma_{W_{\alpha,p}};\beta)$ and $P_h$ be defined as above. For every $p,q\in \NN$ such that $1\leq p+q\leq \beta$ we have 
$$
\Big|\Big(-\frac{1}{\sinh s}\partial_s\Big)^{p}\Big(-\frac{1}{\sinh s}\partial_s\Big)^{q}\hat P_h(s)\Big|\lesssim e^{-(p+\frac{q}{2})\,s }\,s^{-\beta}\,e^{-W_{\alpha,p}\,s }
\qquad \forall s>1\,.
$$
\end{lemma}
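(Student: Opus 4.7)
The plan is to reduce the inequality to pointwise bounds on the ordinary derivatives of $\hat P_h$, and then to derive those via a contour shift combined with integration by parts in the Fourier integral defining $\hat M_\alpha$. I read the second operator as $-\frac{1}{\sinh(s/2)}\partial_s$, which is consistent both with the exponent $e^{-(p+q/2)s}$ on the right-hand side and with the Abel inversion formulas \eqref{inv1}--\eqref{inv2}. A routine induction on $p,q$, based on the elementary estimates $|\partial_s^r(1/\sinh s)|\lesssim e^{-s}$ and $|\partial_s^r(1/\sinh(s/2))|\lesssim e^{-s/2}$ valid on $s>1$, yields
$$
\Bigl(-\tfrac{1}{\sinh s}\partial_s\Bigr)^{p}\Bigl(-\tfrac{1}{\sinh(s/2)}\partial_s\Bigr)^{q} f(s) = \sum_{k=1}^{p+q} c_{p,q,k}(s)\, f^{(k)}(s),
$$
with smooth coefficients satisfying $|c_{p,q,k}(s)|\lesssim e^{-(p+q/2)s}$ for $s>1$. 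This reduces the lemma to proving $|\partial_s^k \hat P_h(s)| \lesssim s^{-\beta}\, e^{-W_{\alpha,p}s}$ for $s>1$ and $0\le k\le \beta$.

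Expanding $\hat P_h=\omega_h\,\hat M_\alpha$ via Leibniz and noting that $\omega_h$ is a translate of a fixed bump, so $|\omega_h^{(j)}|\lesssim 1$ uniformly in $h$, the task reduces further to showing $|\hat M_\alpha^{(m)}(s)|\lesssim s^{-\beta}\,e^{-W_{\alpha,p}s}$ for $s>1$ and $0\leq m\leq\beta$. Using the holomorphy of $M_\alpha$ on $\Sigma_{W_{\alpha,p}}$, I would write
$$
\hat M_\alpha^{(m)}(s)=\frac{1}{2\pi}\int_\RR (i\lambda)^m M_\alpha(\lambda)\, e^{is\lambda}\,d\lambda
$$
as a distributional Fourier integral and shift the contour to the upper edge $\mathrm{Im}\,\lambda=W_{\alpha,p}$ (valid for $s>0$), extracting the prefactor $e^{-W_{\alpha,p}s}$. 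Repeated integration by parts in $\sigma$ along the shifted line, using the boundary decay $|D^j M_\alpha(\sigma+iW_{\alpha,p})|\lesssim (1+\sigma^2)^{-j/2}$ for $j\le\beta$, produces the factor $s^{-\beta}$.

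The main obstacle is the justification of the $\beta$-fold integration by parts when $m$ is close to $\beta$. By Leibniz, every summand in $D^j\bigl[(\sigma+iW_{\alpha,p})^m M_\alpha(\sigma+iW_{\alpha,p})\bigr]$ decays on the shifted line only like $|\sigma|^{m-j}$, so for $m\ge\beta-1$ the integrand with $j=\beta$ fails to be absolutely integrable and the boundary terms at $\sigma=\pm\infty$ do not decay. To handle these borderline cases I would split the shifted-contour integral at $|\sigma|=1$: on $|\sigma|\leq 1$ the $\beta$ integrations by parts cause no issue, while on $|\sigma|>1$ I would exploit the oscillation of $e^{is\sigma}$ by a further, soft integration by parts, gaining the additional decay needed to make the integrand summable at infinity and the boundary terms vanish. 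Combining this bound with the Leibniz expansion and the coefficient estimates from the first step yields the claim.
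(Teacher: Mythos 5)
Your first step---the induction producing $\bigl(-\tfrac{1}{\sinh s}\partial_s\bigr)^{p}\bigl(-\tfrac{1}{\sinh(s/2)}\partial_s\bigr)^{q}\hat P_h=\sum_{k=1}^{p+q}c_{p,q,k}\,D^k\hat P_h$ with $|c_{p,q,k}(s)|\lesssim e^{-(p+q/2)s}$ for $s>1$---is exactly the paper's formula \eqref{inductiveformula}, and your reading of the second operator as involving $\sinh(s/2)$ is the intended one. The divergence is in how the remaining estimate $|D^k\hat P_h(t)|\lesssim(1+|t|)^{-\beta}e^{-W_{\alpha,p}|t|}$ for $k\le\beta$ is obtained: the paper simply imports it from \cite[Proposition 5.8]{HMM} (this is \eqref{derPh}), whereas you try to reprove it by a contour shift followed by $\beta$ integrations by parts along the boundary line. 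That reproof has a genuine gap precisely at the orders $m=\beta-1$ and $m=\beta$, which the lemma does require: for $p+q=\beta$ your Leibniz expansion of $D^{p+q}(\omega_h\hat M_\alpha)$ involves $D^{\beta}\hat M_\alpha$.

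Concretely, once you sit on the line $\mathrm{Im}\,\lambda=W_{\alpha,p}$ the only information retained about $M_\alpha$ is the symbol-type bound $|D^jM_\alpha(\sigma+iW_{\alpha,p})|\lesssim(1+|\sigma|)^{-j}$, $j\le\beta$. Every Leibniz term of $D^{\beta}\bigl[(\sigma+iW_{\alpha,p})^mM_\alpha(\sigma+iW_{\alpha,p})\bigr]$ then has size $(1+|\sigma|)^{m-\beta}$, which fails to be integrable for $m\ge\beta-1$; worse, the intermediate boundary contributions $D^{j-1}\bigl[(\sigma+iW_{\alpha,p})^mM_\alpha\bigr]$ grow like $|\sigma|^{m-j+1}$ for $j\le m$, so the chain of integrations by parts is not even formally legitimate without a dyadic regularisation in $\sigma$, and after regularising the resulting sum $\sum_k2^{k(m-\beta+1)}$ diverges exactly when $m\ge\beta-1$. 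Your proposed remedy does not close this: the obstruction lives at $\sigma=\infty$, not at $\sigma=0$, so splitting at $|\sigma|=1$ changes nothing, and any ``further'' integration by parts (or a difference/H\"older variant of it) would have to differentiate $D^{\beta}M_\alpha$ on the boundary line, i.e.\ would require a $(\beta+1)$-st derivative that the hypothesis does not supply. Indeed the bound $|\widehat{\sigma^{\beta}g}(t)|\lesssim|t|^{-\beta}$ is false for general $g$ satisfying only $|D^jg(\sigma)|\lesssim(1+|\sigma|)^{-j}$ for $j\le\beta$ (consider $g(\sigma)=\sum_{k\ge1}2^{-k\beta(1+\varepsilon)}e^{i2^{k\varepsilon}\sigma}\varphi(2^{-k}\sigma)$ with $\varphi$ a fixed bump on $[1,2]$, for which $|\widehat{\sigma^{\beta}g}(2^{k\varepsilon})|\gtrsim2^{k(1-\varepsilon\beta)}$), so the holomorphy of $M_\alpha$ in the interior of the strip must be exploited again after the contour shift. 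To repair the argument you should either invoke \cite[Proposition 5.8]{HMM} for \eqref{derPh}, as the paper does, or supply a genuine proof for the top two derivative orders that uses more than the boundary estimates.
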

\begin{proof}
By an induction argument, we may prove that there exist smooth functions $\psi_1,\dots,\psi_{p+q}$ bounded and with bounded derivatives of any order in $(1,+\infty)$, such that
\begin{equation}\label{inductiveformula}
\Big(-\frac{1}{\sinh s}\partial_s\Big)^{p}\Big(-\frac{1}{\sinh s}\partial_s\Big)^{q}\hat P_h(s)=e^{-(p+\frac{q}{2})\,s }\, \sum_{i=1}^{p+q}\psi_i(s) D^i \hat{P_h}(s)\qquad\forall s>1\,.
\end{equation}
Following the argument of \cite[Proposition 5.8]{HMM} we see that since $M_{\alpha}$ 
is in $H^{\infty}(\Sigma_{W_{\alpha,p}};\beta)$, then
\begin{equation}\label{derPh}
|D^k\hat{P}_h(t)|\leq C\,\|M_\alpha\|_{W_{\alpha,p};\beta}\,(1+|t| )^{-\beta}\,\nep^{-W_{\alpha,p}|t|}\qquad\forall t\in (h-2,h),\,\forall k\leq \beta\,,
\end{equation}
and $D^k\hat{P}_h(t)=0$ if $t\notin (h-2,h)$. 

We then apply estimate \eqref{derPh} in formula \eqref{inductiveformula} to complete the proof.
\end{proof}

\begin{proposition}\label{global}
If $M_{\alpha}$ 
is in $H^{\infty}(\Sigma_{W_{\alpha,p}};\beta)$ for $\beta>2$, then the function $g_p$ defined in \eqref{lpgp} is in $L^1(\rho)$. In particular, $g_p$ lies in $Cv_p(\rho)$ for all $p\in (1,\infty)$. 
\end{proposition}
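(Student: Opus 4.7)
The plan is to prove the stronger statement $g_p\in L^1(\rho)$, from which $g_p\in Cv_p(\rho)$ for every $p\in(1,\infty)$ follows at once by Young's convolution inequality. Using the decomposition \eqref{decgp}, the task reduces to bounding $\|g_{p,h}\|_{L^1(\rho)}$ and summing over $h\geq 3$. The guiding idea is to transfer the right-Haar integral to the left-Haar integral and then exploit the radiality of $k_h$ to reduce to a one-dimensional integral against the density $A(r)$. Writing $\chi_\alpha(X,Z,a)=a^\alpha$, $\delta(X,Z,a)=a^{-Q}$ and $\di\rho=\delta^{-1}\di\mu$, a direct substitution gives
$$
\|g_{p,h}\|_{L^1(\rho)}=\int_S a(x)^{\alpha(1/p-1/2)+Q/2}\,|k_h(r(x))|\,\di\mu(x).
$$
Integrating first over spheres of constant distance introduces the spherical average of $a^{\alpha(1/p-1/2)+Q/2}$. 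Since $\phi_\lambda=\mathcal{R}(a^{-i\lambda+Q/2})$, taking $\lambda=i\alpha(1/p-1/2)$ identifies this average with $\phi_{i\alpha(1/p-1/2)}(r)$, and Lemma \ref{funzionisferiche} yields $\mathcal{R}(a^{\alpha(1/p-1/2)+Q/2})(r)\lesssim e^{W_{\alpha,p}r}(1+r)e^{-Qr/2}$.

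Next I would estimate $|k_h(r)|$ pointwise via the Abel inversion formulas \eqref{inv1}--\eqref{inv2} combined with Lemma \ref{DPhhat}. In the case $m_\zg$ even, applying the lemma with exponents $m_\zg/2$ and $m_\vg/2$ and using the identity $m_\zg/2+m_\vg/4=Q/2$ gives $|k_h(r)|\lesssim e^{-Qr/2}r^{-\beta}e^{-W_{\alpha,p}r}$ for $r\in[h-2,h]$, with $k_h$ vanishing elsewhere since the inversion is a local differential operator. The case $m_\zg$ odd is more delicate: the Abel inversion is nonlocal, so $k_h$ may be nonzero on all of $[0,h]$. One has to estimate the singular integral against $\di\nu(s)=(\cosh s-\cosh r)^{-1/2}\sinh s\,\di s$ by combining Lemma \ref{DPhhat} applied with exponents $(m_\zg+1)/2$ and $m_\vg/2$ (for which $(m_\zg+1)/2+m_\vg/4=Q/2+1/2$) with the crude bound $(\cosh s-\cosh r)^{-1/2}\lesssim e^{-s/2}$ valid when $s$ is in the support $[h-2,h]$ and $r$ is away from it. This is where I expect the main bookkeeping to lie.

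Combining these ingredients with $A(r)\lesssim e^{Qr}$ from \eqref{pesoA}, the exponentials $e^{-Qr/2}\cdot e^{Qr}\cdot e^{-Qr/2}$ and $e^{-W_{\alpha,p}r}\cdot e^{W_{\alpha,p}r}$ cancel exactly. In the even case one obtains directly
$$
\|g_{p,h}\|_{L^1(\rho)}\lesssim\int_{h-2}^h r^{-\beta}(1+r)\,\di r\lesssim h^{1-\beta},
$$
and in the odd case the additional contribution from $r\in[0,h-2]$ integrates to the same order $O(h^{1-\beta})$, thanks to the same cancellation of exponentials. Since $\beta>2$, the series $\sum_{h\geq 3}h^{1-\beta}$ converges, whence $g_p\in L^1(\rho)$ and the conclusion follows. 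The conceptual point is that the width $W_{\alpha,p}$ of the strip in the hypothesis is precisely matched to the exponential growth of $|\chi_\alpha^{1/p-1/2}|$: this calibration is what forces all exponentials to cancel and leaves only polynomial decay in $h$.
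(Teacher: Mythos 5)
Your proposal is correct and follows essentially the same route as the paper: reduce to $\|g_{p,h}\|_{L^1(\rho)}$ via the measure change $\di\rho=\delta^{-1}\di\mu$, identify the spherical average with $\phi_{i\alpha(1/p-1/2)}$ and invoke Lemma \ref{funzionisferiche}, estimate $k_h$ through the Abel inversion formulas and Lemma \ref{DPhhat} in the two parity cases, and sum $h^{1-\beta}$ over $h\geq 3$. The only place you leave details open is the nonlocal odd case for $r<h-2$, where the paper keeps the integrable singularity $(h-2-r)^{-1/2}$ and uses the exponential decay in $h-r$ (rather than an exact cancellation) to get $O(h^{1-\beta})$ — exactly the bookkeeping you anticipated.
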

\begin{proof}
We compute the $L^1$ norm of each function $g_{p,h}$:
\begin{equation}\label{gph}
\begin{aligned}
\|g_{p,h}\|_{L^1(\rho)}&=\int  \chi_{\alpha}^{ (1/p-1/2)}\,\,  \,|P_h(\sqrt  L_0)\delta_e|\di\rho\\
&=\int   a^{\alpha(1/p-1/2)}     \,\delta^{1/2}\,|k_h|\di\rho\\
&=\int   a^{\alpha(1/p-1/2)}     \,\delta^{-1/2}\,|k_h|\di\mu\\
&=\int       \,\delta^{-\frac{\alpha}{Q}\, (1/p-1/2)  -1/2}\,|k_h|\di\mu\\
&=\int \phi_{i\big(\alpha(1/p-1/2) \big)} \,|k_h |\,\di\mu\\
&=\int_0^{\infty} \phi_{i\big(\alpha(1/p-1/2) \big)}(r)\,|k_h(r)|\,A(r)\di r \,,
\end{aligned}
\end{equation}
where 
$A(r)$ is defined in formula \eqref{misura}.

The kernel $k_h$ can be computed by using the inverse Abel transform. We study the cases when $m_\frak{z}$ is either even or odd. 

\smallskip

{\bf{Case $m_\frak{z}$ even.}} In this case by formula \eqref{inv1}
$$
k_h(r)=\mathcal A^{-1}\mathcal F^{-1} P_h(r)=a^e_S\,\Big(-\frac{1}{\sinh r}\partial_r\Big)^{m_\frak{z}/2}\Big(-\frac{1}{\sinh r/2}\partial_r\Big)^{m_\frak{v}/2}\hat P_h(r)\,.
$$
Clearly $k_h(r)=0$ if $r \notin (h-2,h)$.  By Lemma \ref{DPhhat} if 
$r\in (h-2,h)$, then
$$
|k_h(r)|\lesssim e^{-\frac{Q}{2}\,r }\,r^{-\beta}\,e^{-W_{\alpha,p}\,r }\,.
$$
Then by \eqref{gph} we have
\begin{align*}
\|g_{p,h}\|_{L^1(\rho)}&\lesssim 
\int_{h-2}^{h}e^{W_{\alpha,p}\,r }\,e^{-\frac{Q}{2}\,r }\,(1+r)\,e^{-\frac{Q}{2}\,r }\,r^{-\beta}\,e^{-W_{\alpha,p}\,r }\,
e^{Q\,r }\,\di r\\
&\lesssim h^{1-\beta}\,,
\end{align*}
where we applied Lemma \ref{funzionisferiche} with $\lambda= i\alpha(1/p-1/2)$ and \eqref{pesoA}. By summing over $h\geq 3$ we obtain that $g_p\in L^1(\rho)$ if $\beta >2$.
\vskip0.2cm

{\bf{Case $m_\frak{z}$ odd.}} In this case by formula \eqref{inv2}
$$
k_h(r)=\mathcal A^{-1}\mathcal F^{-1} P_h(r)=a^o_S\,\int_r^{\infty} \Big(-\frac{1}{\sinh s}\partial_s\Big)^{\frac{m_\frak{z}+1}{2}} \Big(-\frac{1}{\sinh s/2}\partial_s\Big)^{\frac{m_\frak{v}}{2}}\hat P_h(s)\,
\frac{\sinh s}{\sqrt{\cosh s-\cosh r}}\,\di s\,.
$$
If $r\geq h$, then $k_h(r)=0$. If $r\in (h-2,h)$, then by Lemma \ref{DPhhat} 
\begin{align*}
|k_{h}(r)|&\lesssim \int_r^{h} e^{-\frac{Q+1}{2}\,s }\,s^{-\beta}\,e^{-W_{\alpha,p}\,s }\,\frac{e^s}{ \sqrt{\cosh s-\cosh r}}\di s\\
&\lesssim \int_r^{h} e^{-\frac{Q+1}{2}\,s }\,s^{-\beta}\,e^{-W_{\alpha,p}\,s }\,\frac{e^s}{e^{{r}/{2}}\,\sqrt{s- r}}\di s\\
&\lesssim e^{-\frac{Q+1}{2}r}\,r^{-\beta}\,e^{-W_{\alpha,p}\,r }\,e^{r/2}\int_0^2\frac{\di v}{\sqrt v}\\
&\lesssim e^{-\frac{Q}{2}\,r }\,r^{-\beta}\,e^{-W_{\alpha,p}\,r }\,.
\end{align*}
If $r<h-2$, then by Lemma \ref{DPhhat} 
\begin{align*}
|k_{h}(r)|&\lesssim \int_{h-2}^{h}  e^{-\frac{Q+1}{2}\,s }\,s^{-\beta}\,e^{-W_{\alpha,p}\,s }\frac{e^s}{e^{{r}/{2}}\,\sqrt{s- r}}\di s\\
&\lesssim e^{-\frac{Q+1}{2}\,h }\,h^{-\beta}\,e^{-W_{\alpha,p}\,h }\, e^h e^{-r/2}\int_{h-2}^h \frac{1}{\sqrt{s- r}}\di s\\
&\lesssim e^{-\frac{Q+1}{2}\,h }\,h^{-\beta}\,e^{-W_{\alpha,p}\,h }\, e^h e^{-r/2} \frac{1}{\sqrt{h-2-r}}.
\end{align*}

 By \eqref{gph}, Lemma \ref{funzionisferiche}, \eqref{pesoA} and the above estimates, we have
\begin{align*}
\|g_{p,h}\|_{L^1(\rho)}&\lesssim \int_0^{1} e^{-\frac{Q-1}{2}\,h }\,h^{-\beta-\frac{1}{2}}\,e^{-W_{\alpha,p}\,h }\,r^{n-1}\di r\\
&+
\int_1^{h-2}e^{W_{{\alpha,p}\,}r }\,e^{-\frac{Q}{2}\,r }\,(1+r)\, e^{-\frac{Q+1}{2}\,h }\,h^{-\beta}\,e^{-W_{\alpha,p}\,h }\,e^h e^{-r/2} \frac{1}{\sqrt{h-2-r}}\, e^{Q\,r }\di r\\
&+
\int_{h-2}^{h}e^{W_{\alpha,p}\,r }\,e^{-\frac{Q}{2}\,r }\,(1+r)\,e^{-\frac{Q}{2}\,r }\,r^{-\beta}\,e^{-W_{\alpha,p}\,r }\,
e^{Q\,r }\di r\\
&\lesssim e^{-\frac{Q-1}{2}\,h }\,h^{-\beta-\frac{1}{2}}\,e^{-W_{\alpha,p}\,h } +{\rm I} + h^{1-\beta}.
\end{align*}
We are left with the estimate of the summand
$$
{\rm I} =\int_1^{h-2}e^{W_{{\alpha,p}\,}r }\,e^{-\frac{Q}{2}\,r }\,(1+r)\, e^{-\frac{Q+1}{2}\,h }\,h^{-\beta}\,e^{-W_{\alpha,p}\,h }\,e^h e^{-r/2}  \frac{1}{\sqrt{h-2-r}}\, e^{Q\,r }\di r.
$$
By the change of variables $v=h-2-r$, we get
\begin{align*}
I &\lesssim h^{1-\beta} e^{-\frac{Q-1}{2}h}e^{-W_{\alpha,p}h}\int_{0}^{h-3}\frac{e^{(\frac{Q-1}{2}+W_{\alpha,p})(h-2-v)}}{\sqrt{v}}\di v\\
&\lesssim h^{1-\beta}.
\end{align*}
Therefore $\|g_{p,h}\|_{L^1(\rho)}\lesssim h^{1-\beta}$. 
By summing over $h\geq 3$ we obtain that $g_p\in L^1(\rho)$ if $\beta >2$. 
\end{proof}

\end{document}